% comment/uncomment to toggle display labels.
%\documentclass[final,12pt,a4paper]{amsart} 
%\documentclass[draft,12pt,a4paper]{amsart} 
\documentclass[final]{amsart}
%\documentclass{amsart}
%\setlength{\textwidth}{14.5cM}
%\setlength{\textheight}{21.8cM}
%\topmargin .1cM \oddsidemargin 1.5cM \evensidemargin 1.5cM
%\baselineskip=18pt

%==============================================================================

% Tikz commands -- do not move into this file.
%%% tikz is commented out for the time being.
\usepackage{tikz,tikz-cd} 
\usetikzlibrary{shapes.geometric,positioning}
\usetikzlibrary{arrows,decorations.pathmorphing,decorations.pathreplacing}
\usetikzlibrary{matrix,arrows.meta}
\usepackage{tkz-graph}
 \usepackage[all]{xy}
 \usepackage[colorlinks=true,pagebackref,hyperindex]{hyperref}
\usepackage[font=small]{caption}
\usepackage{fancyvrb}
\usepackage[normalem]{ulem}
\usepackage{xspace}

\usepackage{tikz}
\usepackage{tkz-graph}
\usetikzlibrary{arrows}
\usetikzlibrary{backgrounds}
\usetikzlibrary{calc}
\tikzstyle{bl}=[circle, draw=white, thin,fill=black!100, scale=0.5]
\tikzstyle{cg}=[circle, draw, thin,fill=black!10, scale=0.8]
%\tikzstyle{cg}=[circle, draw, thin,fill=black!10, scale=0.8]
\tikzstyle{cw}=[circle, draw, thin,fill=white, scale=0.8]

\usepackage{booktabs}
\usepackage{multirow}
\usepackage{colortbl}
\usepackage{boldline}
\usepackage{cellspace}
\setlength\cellspacetoplimit{2pt}
\setlength\cellspacebottomlimit{2pt}

\newcommand{\txn}[1]{\textnormal{#1}}
\newcommand{\rsphere}{\mathbb{C}\mathbb{P}^1}
\newcommand{\algbr}{\overline{\mathbb{Q}}}
\newcommand{\absgal}{\txn{Gal}(\algbr/\mathbb{Q})}
\newcommand{\inv}[1]{#1^{-1}}

\newcommand{\per}{(\sigma,\alpha,\varphi)}

\newcommand{\belyi}{Bely\u{\i}\xspace}

\newcommand{\zoi}{\{0,1,\infty\}}
\usepackage{color}
%define new color palette
\definecolor{blue}{rgb}{0.0, 0.0, 1.0}
\definecolor{candyapplered}{rgb}{1.0, 0.03, 0.0}
%%%%%%%%%%%%%%%%%%%%%
%========================================================================
%Use packages
%========================================================================

%\usepackage{amsthm}
\usepackage{graphicx}
\usepackage[all]{xy}
\usepackage{placeins}
\usepackage{enumitem}
\usepackage{amssymb}
\usepackage{latexsym}
\usepackage{amsmath}
\usepackage{mathrsfs}
\usepackage{array,booktabs}
\usepackage{verbatim}
\usepackage{color}
\usepackage[normalem]{ulem}
\usepackage{float}

\usepackage{soul}
%=============================================================
%=============================================================================

%\newcommand\mymarginpar[1]{\marginpar{\textit{\flushleft #1}}} 

%\newcommand*\circled[1]{\tikz[baseline=(char.base)]{
   %         \node[shape=circle,draw,inner sep=1pt, line width=.3, scale=.5] (char) {#1};}}

\usepackage{color}
%define new color palette
\definecolor{teal}{rgb}{0.0, 0.5, 0.5}
\definecolor{tealblue}{rgb}{0.21, 0.46, 0.53}
\definecolor{tealgreen}{rgb}{0.0, 0.51, 0.5}
\definecolor{tuscanred}{rgb}{0.51, 0.21, 0.21}
\definecolor{sangria}{rgb}{0.57, 0.0, 0.04}
\definecolor{rufous}{rgb}{0.66, 0.11, 0.03}
\definecolor{pinegreen}{rgb}{0.0, 0.47, 0.44}
\definecolor{darkscarlet}{rgb}{0.34, 0.01, 0.1}
\definecolor{darkseagreen}{rgb}{0.56, 0.74, 0.56}
\definecolor{darkpastelred}{rgb}{0.76, 0.23, 0.13}
\definecolor{darkpink}{rgb}{0.91, 0.33, 0.5}
\definecolor{darkpastelblue}{rgb}{0.47, 0.62, 0.8}
\definecolor{alizarin}{rgb}{0.82, 0.1, 0.26}
\definecolor{candyapplered}{rgb}{1.0, 0.03, 0.0}
%%%%%%%%%%%%%%%%%%%%%

%\newcommand{\Hom}{\textup{Hom}}
%\newcommand{\per}{\textup{per}}

%=============================================================

\usepackage{hyperref}

\newcommand{\hyref}[2]{ \hyperref[#2]{#1~\ref*{#2}} }

%%% mathtools seems to conflict with something, so I build \coloneqq by hand:
%\usepackage{mathtools}

%=============================================================================
% Theorem environments
%=============================================================================

\theoremstyle{plain}
\newtheorem{theorem}{Theorem}[section]

\newtheorem{corollary}[theorem]{Corollary}
\newtheorem{proposition}[theorem]{Proposition}

\theoremstyle{definition}

\newtheorem{example}[theorem]{Example}
\newtheorem{definition}[theorem]{Definition}

%=============================================================================

%===================================================================
% Sans serif letters upper case 
%===================================================================

%Font for small sans serif letters

\DeclareMathAlphabet{\mathpzc}{OT1}{pzc}{m}{it}

%==============================================================================
%     Blackboard bold letters upper case 
%==============================================================================

%==============================================================================
%   \mathcal upper case letters
%==============================================================================

%==============================================================================
%    Typewriter uppercase 
%==============================================================================

%==============================================================================
%   barred letters
%==============================================================================

%==============================================================================
% Macros for mathematical symbols
%==============================================================================

\renewcommand{\mod}[1]{\mathsf{mod}(#1)}

\renewcommand{\emptyset}{\varnothing}

%==============================================================================
% XY-pic commands
%==============================================================================

\newcommand{\arr}{\ar@{-}[r]}
%\newcommand{\ard}{\ar@{=}[d]}

%This is a fudge for shrinking entries in xymatrix

%==============================================================================
% Matrices 
%==============================================================================

%==============================================================================
% Greek letters
%==============================================================================

\renewcommand{\phi}{\varphi}
\renewcommand{\epsilon}{\varepsilon}

 \setlength{\textwidth}{14cM}
 \setlength{\textheight}{19cM}
 \topmargin 1.5cM \oddsidemargin 1cM \evensidemargin 1cM
 \baselineskip=18pt
% \setlength{\parskip}{1.3ex plus 0.5ex minus 0.3ex}

%==============================================================================
% Start of paper
%==============================================================================
\begin{document}

\date{\today}

\parskip7pt
\parindent0pt

\hyphenation{Gro-then-di-eck Pro-gra-mme Ma-the-mat-i-cal en-fan-ts}

\title{Dessins d'enfants, Brauer graph algebras and Galois invariants }
\thanks{Most of this work has been carried out while the second author was working at the University of Leicester and it has been supported through the EPSRC  Early Career Fellowship EP/P016294/1.  The first author thanks the University of Leicester for their hospitality. The second author  would  also like to thank the Isaac Newton Institute for Mathematical Sciences, Cambridge, for support and hospitality during the programme 'Cluster Algebras and Representation Theory' where part of the work on this paper was undertaken. This work was supported by EPSRC grant EP/R014604/1.}
\subjclass[2000]{Primary: 
16P10,  % fd algebras
18E30,  % Derived and triang categories
11G32 % Belyi theory
	14H57 % algebraic curves
}
\keywords{Dessins d'enfants, Galois invariant, absolute Galois group, finite dimensional algebra, Brauer graph algebra, derived equivalence}

\author{Goran Mali\'c}
\address{Department of Mathematics, Smith College, Northampton, MA 01063, USA}
\email{goranm00@gmail.com}
\author{Sibylle Schroll} 
\address{Department of Mathematics, Universit\"at zu K\"oln, Weyertal 86-90, 50931 K\"oln, Germany}
\email{schroll@math.uni-koeln.de}

\begin{abstract}
In this paper, we associate a finite dimensional algebra, called a Brauer graph algebra,  to every clean dessin d'enfant by constructing a quiver based on the monodromy of the dessin. We show that  Galois conjugate dessins d'enfants give rise to derived equivalent Brauer graph algebras and that   the stable Auslander-Reiten quiver and the dimension of the Brauer graph algebra are invariant under the induced action of the absolute Galois group.
\end{abstract}

\maketitle

%\tableofcontents

\section{Introduction}

The aim of this paper is to introduce a new algebraic structure  to a dessin d'enfant. This new algebraic structure corresponds to a well-known class of finite dimensional algebras arising from the representation theory of finite groups, called Brauer graph algebras.  We show that  the action of the absolute Galois group 
$\rm{Gal}(\overline{\mathbb{Q}}/\mathbb{Q})$ on dessins d'enfants naturally induces an action of $\rm{Gal}(\overline{\mathbb{Q}}/\mathbb{Q})$ on  Brauer graph 
algebras.
 It is our hope that the rich representation theory of Brauer graph algebras will be a source of new Galois invariants. That this might be the case is supported by the results in this paper. In particular, we show  that the dimension of the Brauer graph algebra associated to a clean dessin d'enfant is a Galois invariant and that two Galois conjugate clean dessins d'enfants give rise to derived equivalent Brauer graph algebras. This shows that the Galois orbits of Brauer graph algebras stratify the derived equivalence classes of Brauer graph algebras. 

A dessin d'enfant (dessin for short) is a connected bipartite graph cellularly embedded in a connected, closed and orientable surface, with vertices coloured in black and white. Dessins d'enfants were introduced by Alexandre Grothendieck in his Esquisse d'un Programme \cite{Grothendieck} as a means of studying $\absgal$  as the automorphism group of a certain topological object sewn together from moduli spaces of curves with marked points.  However, dessins d'enfants  can be traced as far back as 1879 where the idea already appears in a  paper  by Klein \cite{Klein}. The central result in the theory of dessins d'enfants is \belyi's theorem \cite{belyi80} which establishes a 1-1 correspondence between isomorphism classes of dessins and isomorphism classes of smooth projective curves defined over $\algbr$ equipped with a holomorphic projection to $\rsphere$ ramified over a subset of $\zoi$. A remarkable consequence of \belyi's theorem is that $\absgal$ acts faithfully on the set of dessins which enables us to \emph{see} the action of $\absgal$ on number fields as an action on embedded graphs. A major open problem of the theory is to understand the invariants of this action and consequently to be able to distinguish between any two orbits. Several invariants that distinguish between certain orbits are known, but we are still far from a satisfactory answer.

In this paper we will work with dessins whose white vertices always have degree 2. Such a dessin is called clean. We can identify a clean dessin $D$ with a (not necessarily bipartite) graph $G$ by ``forgetting'' the white vertices of $D$.

Brauer graph algebras originate in the modular representation theory of finite groups \cite{Janusz} and their representation theory is well understood. They coincide with the class of symmetric special biserial algebras \cite{Roggenkamp, Schroll}, and they are of  tame representation type \cite{WW}, that is, the isomorphism classes of their indecomposable representations can be parametrised by finitely many one parameter families. Brauer graph algebras are defined by their so-called Brauer graph, a vertex decorated graph with a cyclic ordering of the edges incident to each vertex. Remarkably, much of the representation theory of Brauer graph algebras is encoded in the Brauer graph. Examples of this are projective resolutions (of maximal uniserial submodules of projective indecomposables) which are encoded by the so-called Green walks \cite{Roggenkamp}, the structure of their module category which is encoded by the Auslander-Reiten quiver \cite{Erdmann-Skowronski, Duffield}, their wall and chamber structure in the form of the complex of 2-term tilting complexes \cite{Adachi-Aihara-Chan}, and the Lie algebra structure of their first Hochschild cohomology group \cite{Chaparro-Schroll-Solotar}.  

The idea of this paper arises from the fact that a Brauer graph (minus the vertex decorations) can be naturally seen as a cellularly embedded graph on a connected closed oriented surface where the orientation of the surface gives rise to the cyclic orderings of the edges at each vertex. see, for example, \cite{MS} or  \cite{schrollbrauer}.  This results in a 1-1 correspondence between isomorphism classes of clean dessins d'enfants and isomorphism classes of Brauer graph algebras without vertex decoration.

Our first result is to show that the Galois action preserves the bipartiteness of a clean dessin, where bipartiteness refers to the bipartiteness of the graph obtained from a clean dessin by ``forgetting'' the white vertices. 

\textbf{Theorem A. }[Theorem~\ref{lemma:bipartite}] \emph{
Let $D$ be a clean dessin. Then $D$ is (not) bipartite if and only if any Galois conjugate of $D$ is (not) bipartite. }

Combining Theorem A  with known results on Galois conjugate dessins, which we recall in Theorem 2.2,  and a recent result on derived equivalences of Brauer graph algebras \cite[Theorem 7.11]{Opp-Zvn}, we obtain that two Galois conjugate dessins give rise to derived equivalent Brauer graph algebras. 

\textbf{Theorem B. }[Corollary~\ref{corollary:derivedeq}]  \emph{Let $D_1$ and $D_2$ be Galois conjugate clean dessins d'enfants with Brauer graph algebras $\Lambda_1$ and $\Lambda_2$ defined over an algebraically closed field $K$}. Then $\Lambda_1$ and $\Lambda_2$ are derived equivalent.

The next result shows that Galois conjugation of dessins d'enfants is more refined than derived equivalences classes of the  corresponding Brauer graph algebras. Namely, we show that Galois conjugation  induces a stratification of the derived equivalence classes of Brauer graph algebras in terms of Galois orbits by showing that  the dimension of a Brauer graph algebra is Galois invariant. Namely, we show the following.

\textbf{Theorem C. }[Theorem~\ref{theorem:dimension}] \emph{Let $D_1$ and $D_2$ be Galois conjugate clean dessins with associated Brauer graph algebras $\Lambda_1$ and $\Lambda_2$. Then $\dim_K\Lambda_{1} =  \dim_K\Lambda_{2} $.
}

In our last result, we show  that the module category of a Brauer graph algebra is Galois invariant in the sense that the stable Auslander-Reiten quiver of a Brauer graph algebra is invariant under the action of the absolute Galois group in the following way.

\textbf{Theorem D. }[Proposition~\ref{proposition:excptubes}] \emph{Let $D$ be a dessin d'enfant and $A_D$ the associated Brauer graph algebra and suppose that $K$ is algebraically closed. Then the number and the rank of the exceptional tubes in the stable Auslander-Reiten quiver of $A_D$ are Galois invariants.
}

In conclusion, while we are not able to derive new Galois invariants from the  connection of dessins d'enfants and Brauer graph algebras, it is remarkable that so much of the representation theoretic structure of the Brauer graph algebras is Galois invariant. This potentially opens up a novel way to construct new Galois invariants.

\subsection*{Acknowledgements} We thank the anonymous referee for carefully reading the manuscript. Their comments and suggestions have improved the quality and clarity of the paper.

\section{Dessins d'enfants and Brauer graph algebras}

Let $G$ be a connected bipartite graph (possibly with multiple edges) with vertices coloured in black and white and let $X$ be a connected, closed and orientable surface. A dessin d'enfant is a cellular embedding $\iota\colon G\hookrightarrow X$ such that the vertices of $G$ are points or 0-cells on $X$, the edges are $1$-cells on $X$ which intersect only at the vertices, and the complement of $\iota(G)$ in $X$ is a disjoint union of open 2-cells called \emph{faces}. The segments on $X$ connecting black and white vertices are called \emph{edges}. Two dessins $G_1\hookrightarrow X_1$ and $G_2\hookrightarrow X_2$ are isomorphic if there is an orientation preserving homeomorphism $h\colon X_1\to X_2$ that restricts to a graph isomorphism $G_1\to G_2$ which sends the black, resp.\ white vertices of $G_1$ to the black, resp.\ white vertices of $G_2$. 

Equivalently, a dessin d'enfant is a pair $(X,f)$ where $X$ is a compact Riemann surface and $f\colon X\to\rsphere$ is a holomorphic ramified covering of $\rsphere$, ramified over a subset of $\zoi$. The preimage $\inv f([0,1])$ of the closed unit interval recovers the embedding of $G$ into $X$ where the preimages $\inv f(0)$ and $\inv f(1)$ correspond to the black and white vertices, respectively. The pair $(X,f)$ is called a \belyi pair and $f$ is called a \belyi function. Two dessins $(X_1,f_1)$ and $(X_2,f_2)$ are isomorphic if there is an orientation preserving homeomorphism $h\colon X_1\to X_2$ such that $f_1=f_2\circ h$.

\subsection{Grothendieck's correspondence and Galois action}

By \belyi's theorem \cite{belyi80,belyi02} a compact Riemann surface $X$, understood as a projective smooth algebraic curve, is defined over $\algbr$ if and only if there exists a \belyi pair $(X,f)$. Grothendieck noted in \cite{Grothendieck} that \belyi's theorem implies a 1-to-1 correspondence of isomorphism classes of dessins d'enfants understood as embedded graphs and the isomorphism classes of \belyi pairs $(X,f)$, and as a consequence there is a natural faithful action of the absolute Galois group $\absgal$ over the rationals on the family of all isomorphism classes of dessins \cite{JonesWolfart}. An automorphism $\theta\in\absgal$ acts on a dessin by acting on its \belyi pair $(X,f)$, i.e.\ by acting on the coefficients of $X$ and $f$. We will denote the action of $\theta\in\absgal$ on a dessin $D=(X,f)$ as $D^\theta=(X,f)^\theta=(X^\theta,f^\theta)$.

This correspondence is known as Grothendieck's correspondence and we will make use of it in Lemma \ref{lemma:bipartite}.

\begin{theorem}[Grothendieck's correspondence]\label{gro-cor} The collection of isomorphism classes of dessins d'enfants is in  1-1 correspondence with the collection of isomorphism classes of pairs $(X,f)$, where $X$ is a compact Riemann surface defined over $\algbr$ and $f\colon X\to\rsphere$ is a holomorphic ramified covering of $\rsphere$, ramified over a subset of $\zoi$.\end{theorem}

A major open problem of the theory of dessins d'enfants is to understand the invariants of the action of $\absgal$ on the collection of dessins, which we call Galois invariants, and consequently to be able to distinguish between any two orbits. The combinatorial nature of dessins results in a number of Galois invariants that can be immediately read from the drawing as a dessin, some of which we now summarize in the following Theorem.

\begin{theorem}[Galois invariants of dessins \cite{GGD,JonesWolfart,LandoZvonkin,schneps_lochak97vol1}]\label{theorem:galinv} Let $D$ be a dessin and $D^\theta$ its  conjugate by $\theta\in\absgal$. Then $D$ and $D^\theta$ have the same
	\begin{itemize}
		\item number and degree sequence of black vertices,
		\item number and degree sequence of white vertices,
		\item number of edges,
		\item number and degree sequence of faces,
		\item genus,
	\end{itemize}
where the degree of a vertex is the number of edges incident to it, the degree of a face is half the number of edges bounding it, and the genus is the topological genus of the underlying surface.
\end{theorem}

An interested reader may find further details on Grothendieck's correspondence, the action of $\absgal$ on the collection of dessins and more intricate Galois invariants in \cite{GGD,JonesWolfart,LandoZvonkin,schneps_lochak97vol1}.

\subsection{Clean dessins d'enfants}\label{section:clean} In this paper we will consider \emph{clean} dessins only, that is the dessins in which all white vertices are of degree 2. Equivalently these are the dessins such that the ramification degree of any point in $\inv f(1)$ is equal to 2. Recall that a face of a clean dessin is a connected region enclosed by a subset of edges, i.e.\ a connected component of the complement of the union of the vertices and the edges. Note that for dessins on the sphere, the ``outer'' region is also a face. Clean dessins are typically drawn with the white vertices omitted, see Figure \ref{figure:clean}.
\begin{figure}[ht]
	\centering
	\includegraphics[width=.3\textwidth]{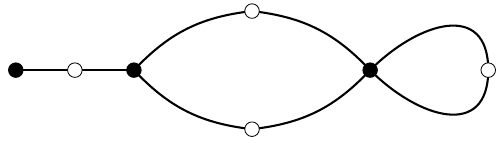}\qquad\includegraphics[width=.3\textwidth]{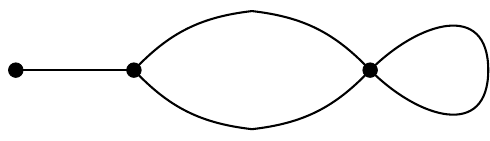}
	\caption{An example of a clean dessin with 3 black vertices, 4 white vertices, 3 faces, 8 edges.  On the right side the dessin is displayed with its white vertices deleted which is often how clean dessins are presented.}
	\label{figure:clean}
\end{figure}

Any dessin $(X,f)$ can be transformed into a clean dessin by replacing $f$ with $g\circ f$, where $g\colon\rsphere\to\rsphere$ is defined by $g(z)=4z(1-z)$ for $z\in\mathbb C$ and $\infty\mapsto\infty$ \cite{GGD}. The dessin $(X,g\circ f)$ is obtained from $(X,f)$ by colouring all the white vertices black and adjoining new white vertices between any two black vertices. Consequently, no algebraic curve over $\algbr$ is excluded by considering clean dessins only.

It is an obvious consequence of Theorem \ref{theorem:galinv} that any Galois conjugate of a clean dessin is again a clean dessin with the same number of vertices, edges and faces.

\subsection{Brauer graph algebras associated to dessins}\label{section:bga}

A quiver $Q = (Q_0, Q_1)$ is given by a finite set of vertices $Q_0$ and a finite set of oriented edges $Q_1$, called arrows, and functions $s,t : Q_1 \to Q_0$ where for $a \in Q_1$, $s(a)$ denotes the start of $a$ and $t(a)$ denotes its target.

Let $D$ be a clean dessin. We define the quiver $Q_D=(Q_0,Q_1)$ associated to $D$ as follows: the set $Q_0$ of vertices of $Q$ corresponds to the white vertices of $D$, and the arrows in $Q_1$ are given by the counter-clockwise orderings of edges around black vertices of degree at least 2, as well as pairs of consecutive (in the counter-clockwise order) parallel edges.

More precisely, let $b$ be a black vertex of degree $k\geq2$ and $w_1, \ldots, w_k$ its adjacent white vertices ordered counter-clockwise around $b$. Then $w_1, \ldots, w_k$ are in $Q_0$ and we add an arrow $w\to w'$ to $Q_1$ for every pair of consecutive white vertices $w,w'\in\{w_1, \ldots, w_k\}$. Furthermore, if there is a pair of consecutive parallel edges connecting a black vertex to a white vertex $w$, we add to $Q_1$ a loop-arrow $w\to w$, see Figure \ref{figure:quiverExample}. Black vertices of degree 1 do not contribute any arrows to $Q_1$. 

Going forward, given a dessin $D$, we denote the multi-set of edges connecting two vertices $v$ and $v'$ of $D$, by $e(v, v')$.  

	\begin{figure}[ht]
		\centering
		\includegraphics[width=.5\textwidth]{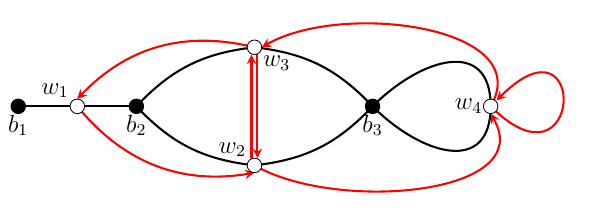}
		\caption{The quiver $Q_D$ of the dessin $D$ from Figure \ref{figure:clean} is shown in red. Note that the two parallel edges connecting $b_3$ and $w_4$ give rise to a loop-arrow.}\label{figure:quiverExample}
	\end{figure}

\begin{definition}\label{definition:relations} Let $b$ be a black vertex of degree $k\geq2$. A cycle of $k$ distinct arrows in $Q_D$ that are consecutive in the counter-clockwise ordering around $b$ is called a \emph{special} $b$\emph{-cycle}. Moreover, a special $b$-cycle starting at a white vertex  $w$ is called \emph{a special} $b$\emph{-cycle at} $w$.
\end{definition}

Given a field $K$ and a quiver $Q$, the \emph{path algebra} $KQ$ has vector space basis given by all possible finite paths in $Q$, including a trivial path $e_j$ for every vertex $j \in Q_0$. The  multiplication of two paths is given by concatenation if possible and zero otherwise. The multiplicative identity of $KQ$ is given by $\sum_{j \in Q_0} e_j$, the sum  of the trivial paths in $Q$. If $Q$ has an oriented cycle then $KQ$ is an infinite dimensional algebra. If $I$ is an ideal of $KQ$ satisfying certain admissibility condition, we call $KQ/I$ a \emph{bound quiver algebra}. For further details on paths algebras of quivers and bound quiver algebras, see for example \cite{Assem, Schiffler}.

Let $K$ be a field and $D$ be a clean dessin with $Q_D=(Q_0,Q_1)$. If $Q_1=\emptyset$, which is the case only when $D$ is the genus 0 dessin with two black vertices and one edge and with \belyi map $z\mapsto 4z(1-z)$, we by convention assign to it the quotient polynomial algebra $k[x]/(x^2)$. Otherwise, let $KQ_D$ be the path algebra of the quiver $Q_D$, and let $I_D=\left\langle\rho_D\right\rangle$ be the ideal of $KQ_D$ generated by the following relations.

\subsubsection*{Relations of type one.} For each white vertex $w$ and each pair $b_i$ and $b_j$ of black vertices of degree at least 2 adjacent to $w$, all relations of the form
\[C_i - C_j,\]
where $C_i$ and $C_j$ are special $b_i$ and $b_j$ cycles at $w$ are in $\rho_D$. Note that we allow $b_i=b_j=b$ in which case $C_i$ and $C_j$ range over all special $b$-cycles at $w$ (hence a non-trivial relation occurs only when $e(b,w)$ contains at least two parallel edges). 

\subsubsection*{Relations of type two.} For all black vertices $b$, all relations of the type $Ca$ are in $\rho_D$, where $C$ ranges across all special $b$-cycles, and $a$ is the first arrow of $C$.

\subsubsection*{Relations of type three.} All paths $a_ia_j$ of length 2 which are not subpaths of any special cycle are relations in $\rho_D$.

\begin{example}
	Consider the dessin $D$ from Figure \ref{figure:clean} together with its labeled quiver $Q_D$ shown in Figure \ref{figure:quiverAlgebraExample}.
	
	\begin{figure}[ht]
		\centering
		\includegraphics[width=.5\textwidth]{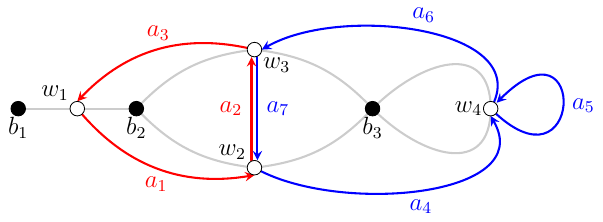}
		\caption{The labeled quiver $Q_D$ of the dessin $D$ from Figure \ref{figure:clean}. The special $b_2$-cycle given by rotations of $a_1a_2a_3$ is shown in red, and the  special $b_3$-cycle given by rotations $a_4a_5a_6a_7$ is shown in blue.}
		\label{figure:quiverAlgebraExample}
	\end{figure}

	There are three special $b_2$-cycles $a_1a_2a_3$, $a_2a_3a_1$ and $a_3a_1a_2$ at $w_1$, $w_2$ and $w_3$, respectively, and there are four special $b_3$-cycles $a_4a_5a_6a_7$, $a_7a_4a_5a_6$, $a_5a_6a_7a_4$ and $a_6a_7a_4a_5$. The first two are special $b_3$-cycles at $w_2$ and $w_3$, respectively, whereas the last two are both special $b_3$-cycles at $w_4$.
	
	The relations of type one are the following: the white vertex $w_1$ contributes no relations. The white vertex $w_2$ contributes $a_2a_3a_1-a_4a_5a_6a_7$. The white vertex $w_3$ contributes $a_3a_1a_2 - a_7a_4a_5a_6$. The white vertex $w_4$ contributes $a_5a_6a_7a_4-a_6a_7a_4a_5$. 
	
	Relations of type two are as follows: the black vertex $b_1$ contributes no relations. The black vertex $b_2$ contributes $a_1a_2a_3a_1$, $a_2a_3a_1a_2$ and $a_3a_1a_2a_3$, and the black vertex $b_3$ contributes $a_4a_5a_6a_7a_4$, $a_7a_4a_5a_6a_7$, $a_5a_6a_7a_4a_5$ and $a_6a_7a_4a_5a_6$.
	
	Finally, relations of type three are: $a_1a_4$, $a_2a_7$, $a_4a_6$, $a_5^2$, $a_6a_3$ and $a_7a_2$.
\end{example}

\begin{definition}
	The \emph{bound quiver algebra} $KQ_D/I_D$ is called a \emph{Brauer graph algebra}.
\end{definition}

In general, Brauer graph algebras are given by \emph{Brauer graphs}, i.e.\ tuples $G=(V,E,\mu,\mathfrak o)$ where $G=(V,E)$ is a graph with $V$ and $E$ as its vertex and edge-sets, together with a function $\mu\colon V\to\mathbb N$ called \emph{multiplicity} and an orientation $\mathfrak o$ given by the choice of a cyclic ordering of the edges in $E$ around the vertices in $V$. Hence we consider a clean dessin $D$ as the Brauer graph $G=(V,E,\mu,\mathfrak o)$ where $V$ is taken to be the set of black vertices of $D$, $E$ is given by the sets $\{b_i,b_j\}\subset V$ such that $b_i$ and $b_j$ are adjacent to the same white vertex of $D$, the multiplicity function maps all black vertices to $1$ and $\mathfrak o$ is the counter-clockwise orientation. 

Brauer graph algebras, see for example \cite{benson} or \cite{schrollbrauer}, coincide with the class of  symmetric special biserial algebras \cite{Schroll} when $K$ is algebraically closed.

\section{Galois conjugate Brauer graph algebras are derived equivalent}

In this section we show that the Galois conjugate of a  bipartite clean dessin is again bipartite.   Together with a recent result by Opper and Zvonareva \cite{Opp-Zvn} this shows that the Brauer graph algebras associated to Galois conjugate clean dessins are derived equivalent.

We start by showing that for a clean dessin the property of being bipartite is preserved by the Galois action. 

Note that a clean dessin $D$ is bipartite if it is bipartite when we consider it as a graph whose only vertices are the black vertices of $D$. That is, if $B$ is the set of black vertices of a clean dessin $D$, and $W$ is the set of its white vertices, define $E$ as the set of pairs of black vertices adjacent to the same white vertex from $W$. Then $D$ is bipartite as a clean dessin if the graph $(B,E)$ is bipartite.

\begin{theorem}\label{lemma:bipartite}
If $D$ is a bipartite clean dessin, then any Galois conjugate of $D$ is also bipartite.
\end{theorem}
\begin{proof}
Let $D=(X_D,f_D)$ be a bipartite clean dessin and let $(U,V)$ be a bipartition of the black vertices of $D$. Consider now a new dessin $G=(X_G,f_G)$ such that $X_G$ is isomorphic to $X_D$ as a topological surface, with black vertices given by $U$, white vertices given by $V$ and edges $E$ such that $e\in H\subseteq U\times V$ if $e=(u,v)$ is an edge of $D$ (see Figure \ref{figure:bipartite}). Note that in general the dessin $G$ will not be clean.

\begin{figure}[ht]
	\centering
	\includegraphics[width=.25\textwidth]{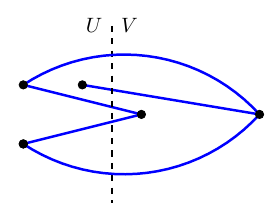}\includegraphics[width=.25\textwidth]{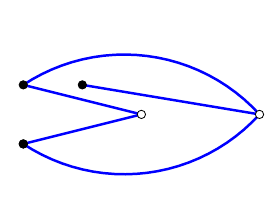}
	\caption{An example of a bipartition $(U,V)$ of a clean dessin and the corresponding dessin $G$. Note that $G$ is not clean.}\label{figure:bipartite}
\end{figure}

Let $g\colon\rsphere\to\rsphere$ be the function defined by $z\mapsto4z(1-z)$ for $z\in\mathbb C$ and $\infty\mapsto\infty$. Recall from Section \ref{section:clean} that if $(X,f)$ is a dessin, then $(X,g\circ f)$ is a clean dessin obtained from $(X,f)$ by colouring all the white vertices black and adjoining new white vertices between any two black vertices \cite{GGD}.

Consider the clean dessin $(X_G,g\circ f_G)$ obtained from $G$. This dessin is clearly isomorphic (as a cellularly embedded graph) to $D$, hence by Grothendieck's correspondence  $(X_D,f_D)$ and $(X_G,g\circ f_G)$ are isomorphic. 

Now if $\theta\in\absgal$, then the action of $\theta$ on $D$ results in the same dessin as the action of $\theta$ on $(X_G,g\circ f_G)$, that is
\[(X_D,f_D)^\theta=(X_G,g\circ f_G)^\theta=(X_G^\theta,(g\circ f_G)^\theta),\]
where all the equalities are up to isomorphisms of dessins. Because the coefficients of $g$ are rational, the action of $\theta$ on $g$ is trivial and we have $(g\circ f_G)^\theta=g\circ f_G^\theta$. It therefore follows that
\[(X_D,f_D)^\theta=(X_G^\theta,g\circ f_G^\theta).\]
Finally, the clean dessin $(X_G^\theta,g\circ f_G^\theta)$ has a bipartition of its black vertices obtained as follows: let $P$ and $Q$ be the sets of black and white vertices of $G^\theta=(X_G^\theta,f_G^\theta)$, respectively. Since the black vertices of the clean dessin $(X_G^\theta,g\circ f_G^\theta)$ are obtained by colouring the white vertices of $G^\theta$ in black, then the black vertices of $(X_G^\theta,g\circ f_G^\theta)$ have a bipartition $(P,Q)$. Therefore $(X_D,f_D)^\theta=(X_G,g\circ f_G)^\theta$ is a bipartite clean dessin.
\end{proof}
The property of not being bipartite is also preserved by the Galois action, and follows  from the Theorem.
\begin{corollary}
	If $D$ is a clean dessin which is not bipartite, then any Galois conjugate of $D$ will also not be bipartite.
\end{corollary}
\begin{proof}
	If $D$ is a clean dessin which is not bipartite with a Galois conjugate $D^\theta$ that is bipartite, then Lemma \ref{lemma:bipartite} would imply that $(D^\theta)^{\inv{\theta}}=D$ is bipartite, which is clearly a contradiction.
\end{proof}

From now on, we assume the field $K$ of a Brauer graph algebra to be algebraically closed. We now briefly recall the main result from \cite{Opp-Zvn}. 

\begin{theorem}\cite[Theorem 7.11]{Opp-Zvn}\label{thm:Opp-Zvn} Let $D_1$ and $D_2$ be clean dessins d'enfants such that their associated Brauer graph algebras $\Lambda_1$ and $\Lambda_2$ are not local. Then $\Lambda_1$ and $\Lambda_2$ are derived equivalent if and only if the following data for $D_1$ and $D_2$ coincides:
\begin{itemize}
	\item[i)] the number of black vertices and edges,
	\item[ii)] the number and the degrees of faces,
	\item[iii)] the multiset of multiplicities,
	\item[iv)] either both $D_1$ and $D_2$ are bipartite, or neither is.
\end{itemize} 
\end{theorem}

By Theorem \ref{theorem:galinv} the number of black and white vertices, edges and faces, as well as the degrees of the faces are all preserved by the Galois action, and moreover, the multisets of multipicities coincide by definition (i.e.\ the multiplicity function $\mu$ assigns multiplicity 1 to each black vertex).

Combining Theorem~\ref{lemma:bipartite} with Theorem~\ref{thm:Opp-Zvn} we obtain that two Galois conjugate dessins give rise to derived equivalent Brauer graph algebras. 

\begin{corollary}\label{corollary:derivedeq} Let $D_1$ and $D_2$ be Galois conjugate clean dessins d'enfants with Brauer graph algebras $\Lambda_1$ and $\Lambda_2$. Then $\Lambda_1$ and $\Lambda_2$ are derived equivalent.  
\end{corollary}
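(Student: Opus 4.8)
The plan is to obtain the result by chaining together three ingredients: the Galois-invariance of the combinatorial data (the ``passport'') of a clean dessin, Antipov's classification of genus $0$ mutation classes \cite{antipov07}, and Kauer's derived equivalence theorem (Theorem~\ref{theorem:Kauer}). Concretely, I would first show that $D_1$ and $D_2$ lie in the same mutation class, then realise the passage from $D_1$ to $D_2$ as a finite chain of single mutations, and finally apply Theorem~\ref{theorem:Kauer} along this chain together with the transitivity of derived equivalence.

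First I would record which invariants $D_1$ and $D_2$ share. Writing $D_1=\per$, an element of $\absgal$ carrying $D_1$ to $D_2$ preserves the genus, the number $n$ of edges, and the conjugacy classes---hence the cycle types---of $\sigma$, $\alpha$ and $\varphi$; the invariance of these cycle types is classical \cite{GGD, LandoZvonkin}. Since the black vertices correspond to the cycles of $\sigma$, the white vertices (each of degree $2$) to the $n$ transpositions of $\alpha$, and the faces to the cycles of $\varphi$, this shows that $D_1$ and $D_2$ agree on the number of vertices, the number of edges, the number of faces, and the multiset of face degrees (the last being the cycle lengths of $\varphi$). These are exactly the data that appear in Antipov's proposition.

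With this in hand the argument is nearly immediate. By Antipov's proposition \cite{antipov07}, two genus $0$ clean dessins that agree on the number of vertices, edges and faces and on the degrees of the faces belong to the same mutation class, so there is a finite sequence $D_1 = D^{(0)}, D^{(1)}, \dots, D^{(r)} = D_2$ with $D^{(i)} = \mu_{e_i}(D^{(i-1)})$ for suitable edges $e_i$. Applying Theorem~\ref{theorem:Kauer} to each step gives a derived equivalence $\mathcal D^b(\Lambda^{(i-1)}) \simeq \mathcal D^b(\Lambda^{(i)})$ between the Brauer graph algebras of consecutive dessins, and since a composite of triangle equivalences is again a triangle equivalence, these compose to a derived equivalence $\mathcal D^b(\Lambda_1) \simeq \mathcal D^b(\Lambda_2)$.

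I do not anticipate a genuine obstacle: once Antipov's proposition and Kauer's theorem are available, the corollary is essentially a matter of verifying that the Galois-invariant passport supplies precisely the invariants Antipov requires. The only points that need care are bookkeeping ones. Since the genus is itself a Galois invariant, the genus $0$ hypothesis really needs to be imposed on only one of the two dessins. One should also keep in mind the standing convention of the paper that the multiplicity function is identically $1$ and the orientation is counter-clockwise, which is exactly what permits Theorem~\ref{theorem:Kauer} to be applied verbatim rather than in Kauer's more general weighted form.
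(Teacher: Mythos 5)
Your argument is exactly the one the paper intends: Galois conjugation preserves the number of vertices, edges and faces and the face degrees (via the cycle types of $\sigma$, $\alpha$, $\varphi$), Antipov's proposition then places $D_1$ and $D_2$ in the same mutation class, and Theorem~\ref{theorem:Kauer} applied along the mutation chain, together with transitivity of derived equivalence, gives the conclusion. The paper leaves this as an immediate consequence of the preceding proposition and theorem, so your write-up is a correct and slightly more detailed version of the same proof.
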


We note that the only local Brauer graph algebra arising from a dessin d'enfant is \sloppy $k[x,y]/(x^2,y^2)$ for which the corresponding dessin is a simple loop. This dessin has $z\mapsto \frac1{4z(z-1)}$ as its \belyi function and therefore is the only member of its Galois conjugacy class. So Corollary 3.4 still holds in the case of a local Brauer graph algebra arising from a dessin d'enfant.

The following result is an indication that Galois conjugation is more refined than the  classification by derived equivalence of the associated Brauer graph algebras. Namely, we show that two Galois conjugate Brauer graph algebras have the same dimension (and this is usually not preserved by derived equivalence). Therefore Galois orbits give a stratification  of the derived equivalence classes of Brauer graph algebras.

\begin{theorem}\label{theorem:dimension}Let $D$ be a clean dessin, $Q_D=(Q_0,Q_1)$ its quiver, and $\Lambda_D=KQ_D/I_D$ the associated Brauer graph algebra. The dimension $\dim_K\Lambda_D$ of $\Lambda_D$ is a Galois invariant.
\end{theorem}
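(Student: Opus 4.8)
The plan is to deduce the theorem directly from the dimension formula recorded in the paragraph preceding the statement, and then to verify that each quantity entering that formula is determined by the cycle type of the triple $\per$ — which is exactly the data preserved by the Galois action. So the first step is to take as given, from the basis of $\Lambda_D$ established in \cite{GreenSchrollBrauerConfig}, the identity
\[
\dim_K\Lambda_D=2|Q_0|+\sum_{\sigma_j}|\sigma_j|\bigl(|\sigma_j|-1\bigr),
\]
the sum running over the black vertices of degree at least $2$. The key reformulation I would make is to observe that a black vertex $\sigma_j$ of degree $1$ contributes $|\sigma_j|\bigl(|\sigma_j|-1\bigr)=1\cdot 0=0$, so the sum may harmlessly be extended over \emph{all} cycles of $\sigma$. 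This exhibits the sum as a symmetric function of the multiset of cycle lengths of $\sigma$ alone, with no dependence on how those cycles are arranged or on the finer conjugacy data of $\per$.

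Next I would identify the remaining term. By definition the vertex set $Q_0$ of $Q_D$ is in bijection with the white vertices of $D$, and since $D$ is clean every white vertex has degree $2$; hence $|Q_0|$ equals the number of edges $n$, equivalently $2|Q_0|=2n$ is the cardinality of the set $\{1,\dots,2n\}$ on which $\per$ acts, i.e. the degree of the associated \belyi map. The final step is then to invoke the standard fact that the Galois action on dessins preserves the ramification data: for $\theta\in\absgal$ the conjugate dessin $\theta(D)$ is represented by a triple acting on a set of the same cardinality $2n$ and having the same cycle type as $\per$ in each of $\sigma$, $\alpha$ and $\varphi$ \cite{GGD, LandoZvonkin}. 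Consequently $|Q_0|=n$ and the multiset $\{|\sigma_j|\}$ are both preserved, so the entire right-hand side of the dimension formula is unchanged, yielding $\dim_K\Lambda_D=\dim_K\Lambda_{\theta(D)}$ and hence Galois invariance of the dimension.

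I do not expect a genuine analytic obstacle here; once the basis of \cite{GreenSchrollBrauerConfig} and the passport invariance are in hand, the argument is pure bookkeeping. The one point that genuinely requires care — and the reason for rewriting the sum over all cycles of $\sigma$ as a symmetric function of the cycle lengths — is to make transparent that the formula depends only on the cycle \emph{type} of $\sigma$ and on $n$, and \emph{not} on the conjugacy class of the permutation triple, which the Galois action need not respect. Verifying that the degree-one black vertices contribute nothing is precisely what guarantees that the quantity we are tracking is of this invariant-by-passport form.
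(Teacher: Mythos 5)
Your proposal is correct and follows essentially the same route as the paper: the paper derives the formula $\dim_K\Lambda_D=2|Q_0|+\sum_{\sigma_j}|\sigma_j|(|\sigma_j|-1)$ from the basis in \cite{GreenSchrollBrauerConfig} and then simply observes that every term is determined by the passport (number of edges and the cycle type of $\sigma$), which is Galois invariant. Your extra remarks — that degree-one black vertices contribute zero so the sum is a symmetric function of the full cycle type of $\sigma$, and that $|Q_0|=n$ because the dessin is clean — are just slightly more explicit bookkeeping of the same argument.
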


\begin{proof}
In \cite{GreenSchrollBrauerConfig} it is shown that a basis of a Brauer graph algebra $\Lambda_D$ induced by a dessin $D$ is given by the subpaths of all special $b$-cycles modulo the admissible ideal, $\{p+I_D\mid p\textnormal{ is a subpath of a special $b$-cycle in }Q_D\}$,
including the trivial paths for each vertex of a dessin. Thus the dimension $\dim_K\Lambda_D$ is given by

\[\dim_K\Lambda_D=2|Q_0|+\sum_{b}|b|(|b|-1)=2|W|+\sum_{\{b\mid\deg b\geq 2\}}\deg b(\deg b-1).\]
Here $Q_0$ is the set of vertices of $Q$, $|b|$ denotes the length of any special $b$-cycle induced by the black vertex $b$, and the sum runs over all special $b$-cycles (up to permutation). Note that we pick only one special $b$-cycle per black vertex in the summation above. Equivalently, the dimension is given in terms of the set $W$ of white vertices of $D$ and the degrees of black vertices of $D$ of degree at least 2. All terms in the dimension formula are Galois invariants, therefore the dimension of a Brauer graph algebra is also a Galois invariant.
\end{proof}

\begin{example}
	The two clean dessins shown in Figure \ref{figure:notSameDim} are derived equivalent but not of the same dimension. That they are derived equivalent follows from the fact that they are related by a \emph{Kauer mutation move} \cite{Kauer}. However, the triangle dessin has three special $b$-cycles of length 2 hence the dimension of its Brauer graph algebra is $3+(2+2+2)=9$, whereas the other dessin has only one special $b$-cycle of length 4 hence the dimension of its Brauer graph algebra is $3+4\cdot 3=15$.
	
	\begin{figure}
		\centering
		\includegraphics[width=.75\textwidth]{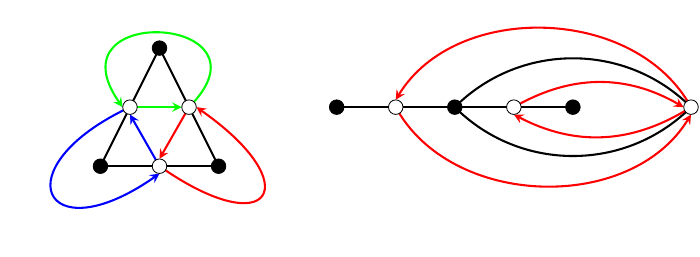}
		\caption{Two clean dessins whose Brauer graph algebras are derived equivalent but not of the same dimension. The clean dessin on the left has three special cycles each of length 2 (show in red, green and blue) and the clean dessin on the right has only one special cycle of length 4 (shown in red).}\label{figure:notSameDim}
	\end{figure}
\end{example}

The faces of a dessin $D$ are in 1-1 correspondence with the \emph{Green walks} around the Brauer graph of $A_D= KQ_D/I_D$. Green walks for Brauer graph algebras have been defined in \cite{Green, Roggenkamp}. 
Let $F$ be a face of a clean dessin of degree $k$ bounded by the edges $e_1$, $\dots$, $e_k$, where $e_{i}$ precedes $e_{i+1}$ in the counter-clockwise cyclic order around $F$ for $i=1,\dots,k-1$ and $e_k$ precedes $e_1$.
\emph{A Green walk starting at} $e_1$ is an infinite periodic sequence $(e_{i \mod{k}})_{i\in\mathbb N}$. The period of a Green walk starting at $e$ is equal to the degree of the face containing $e$.

Each face of odd degree $d$ gives rise to a connected component of the stable Auslander-Reiten quiver of $A_D$ which is an exceptional tube of rank $d$, that is a component of the form  $\mathbb{Z} A_\infty/\tau^{d}$ where $\tau$ is the Auslander-Reiten translate. Each face of even degree $2d$ gives rise to two connected components which are  exceptional tubes of rank $d$, see \cite{Duffield}.

\begin{proposition}\label{proposition:excptubes}Let $D$ be a dessin d'enfant and $A_D$ the associated Brauer graph algebra and suppose that $K$ is algebraically closed. Then the number and the rank of the exceptional tubes in the stable Auslander-Reiten quiver of $A_D$ are Galois invariants.
\end{proposition}
\begin{proof} By \cite{Duffield}, the exceptional tubes in the Auslander-Reiten quiver of $A_D$ are in bijection with the double-stepped Green walks on the Brauer graph. Suppose that  $D=\per$. Then the  Green walks are in bijection with the cycles of $\varphi$ and the double-stepped Green walks correspond to the cycles of $\varphi^2$. The result now follows from the fact that the cycle structure of both $\varphi$ and $\varphi^2$ are Galois invariants \cite{GGD, LandoZvonkin}.
\end{proof}

\subsection*{Statements and Declarations}
There is no data associated to the publication. 

\subsection*{Compliance with Ethical Standards}
The publication is compliant with ethical standards.

\end{document}